\newtheorem{thm}{Theorem}[section]
\newtheorem{cor}[thm]{Corollary}
\newtheorem{lem}[thm]{Lemma}
\newtheorem{prop}[thm]{Proposition}
\theoremstyle{definition}
\newtheorem{rem}[thm]{Remark}
\newtheorem{exam}[thm]{Example}
\numberwithin{equation}{section}
\newcommand{\QQ}{\mathbb Q}
\newcommand{\ZZ}{\mathbb Z}
\newcommand{\CC}{\mathbb C}
\newcommand{\PP}{\mathbb P}
\newcommand{\FF}{\mathbb F}
\newcommand{\lra}{\longrightarrow}
\newcommand{\hra}{\hookrightarrow}
\newcommand{\ra}{\rightarrow}
\newcommand{\cA}{\mathcal{A}}
\newcommand{\tC}{\widetilde{C}}
\newcommand{\cC}{\mathcal{C}}
\newcommand{\cM}{\mathcal{M}}
\newcommand{\cV}{\mathcal{V}}
\newcommand{\cO}{\mathcal{O}}
\newcommand{\cR}{\mathcal{R}}
\newcommand{\cS}{\mathcal{S}}
\newcommand{\cB}{\mathcal{B}}
 \DeclareMathOperator{\Ker}{Ker}
  \DeclareMathOperator{\Fix}{Fix}
 \DeclareMathOperator{\Nm}{{Nm}}
 \DeclareMathOperator{\pr}{{Pr_{2,7}}}
 \DeclareMathOperator{\im}{Im}
\DeclareMathOperator{\End}{{End}}
\begin{document}

\title[The fibers of the Prym map]{The fibres of the Prym map of \\ \'etale cyclic coverings of degree 7 }
\author{ Herbert Lange and Angela Ortega}
\address{H. Lange \\ Department Mathematik der Universit\"at Erlangen \\ Germany}
\email{lange@mi.uni-erlangen.de}
              
\address{A. Ortega \\ Institut f\" ur Mathematik, Humboldt Universit\"at zu Berlin \\ Germany}
\email{ortega@math.hu-berlin.de}

\thanks{The second author is partially supported by the {\it Sonderforschungsbereich} 647 ``Raum-Zeit-Materie"}
\subjclass{14H40, 14H30}
\keywords{Prym variety, Prym map}%

\begin{abstract} 

We study the Prym varieties arising from \'etale cyclic coverings of 
degree 7 over a curve of genus 2. We prove that these Prym varieties are
products of Jacobians $JY \times JY$ of genus 3 curves Y  with  polarization
type $D=(1,1,1,1,1,7)$. We describe the fibers of the Prym map between the moduli space of 
such coverings and the moduli space of abelian sixfolds with
polarization type $D$, admitting an automorphism of order 7.
\end{abstract}

\maketitle

\section{Introduction}

Given a finite covering $f:\tC \ra C$ between smooth projective curves, one can associate to $f$ an abelian subvariety of the Jacobian $J\tC$ by taking
the connected component containing the zero of the kernel of the norm map $\Nm_f : J\tC \ra JC$,  $[D] \mapsto [f_*(D)]$. The resulting variety 
$$
P(f):= (\Ker \Nm_f )^0 \subset J\tC
$$  
is called the {\it Prym variety of $f$}. The restriction of the theta divisor of $J\tC$ to $P(f)$ defines a polarization on the Prym variety which is 
known to be twice a principal  polarization
when $f$ is an \'etale double covering (see \cite{mu}). The assignment $[f: \tC \ra C] \mapsto P(f)$ yields a  {\it Prym map} between the 
moduli space of the corresponding coverings and the moduli space of abelian varieties of suitable dimension and
polarization (not necessarily principally polarized). In very few cases the Prym maps 
are generically finite over its image (see \cite{b}, \cite{ds},  \cite{f}, \cite{lo} and \cite{lo2} ) and often the structure of the fibers 
can be understood in  geometrical terms (\cite{d}, \cite{f}).  

We study the Prym varieties associated to a non-trivial  $7$ cyclic \'etale covering $f: \tC \ra C$ over a curve $C$ of genus 2. 
The corresponding Prym variety
$P(f)$ is a 6-dimensional abelian variety with polarization type $D=(1,1,1,1,1,7)$.  Let $\cR_{2,7}$ denote the 
moduli space of these coverings and $\cB_D$ the moduli space of abelian varieties of dimension $6$ with a polarization of  type $D$ and an 
automorphism of order $7$ compatible with the polarization. In \cite{lo2} we proved that the Prym map  $\pr: \cR_{2,7} \ra \cB_D$ is dominant 
and since both moduli spaces are 3-dimensional (see \cite{lo2} for a proof), the map $\pr$ is generically finite and in fact, of degree 10. 
In this article we give a geometric description of the fibers of the Prym map $\pr$. 

First we prove in Section 2 that for all $[f: \tC \ra C] \in \cR_{2,7}$ the associated Prym variety $P(f) = JY \times JY$, 
where $Y$ is a curve of genus 3 whose Jacobian admits a multiplication in the totally real cubic subfield $\QQ(\zeta_7)^0$ of the cyclotomic
field $\QQ(\zeta_7)$ (Proposition \ref{realmult}); moreover $Y$ is uniquely determined by the Prym variety. In Section 3 we show that the 
curve $Y$ admits a degree 7-covering $Y \ra \PP^1$ with ramification type $(2,2,2,1)$, that is, $f$ is simply ramified  over each branch point
at 3  points and unramified at 1.  Let  $\cC_3$ denote  the locus in $\cM_3$ of curves $Y$ having a covering of this type.  
The main result is the following.

 \begin{thm} \label{main-thm}
The elements of a generic fiber of the Prym map $\pr:\cR_{2,7} \ra \cB_D$ are in bijection with the set of degree 7 coverings 
$\overline f: Y \ra \PP^1$ with ramification type $(2,2,2,1)$ over 6 general points that a curve $Y \in \cC_3$ admits. 
\end{thm}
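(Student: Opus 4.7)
The plan is to construct mutually inverse maps between the fiber $\pr^{-1}([P,\Xi])$ over a generic point $[P,\Xi]\in\cB_D$ and the set of degree $7$ covers $\bar f\colon Y\ra\PP^1$ of ramification type $(2,2,2,1)$ over $6$ points, where $Y$ is the genus~$3$ curve canonically determined by $P=JY\times JY$ via Proposition~\ref{realmult}.

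For the forward map $\Phi$, start with $[f\colon\tC\ra C]$ in the fiber. Let $\sigma$ generate the deck group of $f$ and $\iota$ be the hyperelliptic involution of $C$. Since $\iota^*\eta=\eta^{-1}$ for the character $\eta\in JC[7]$ defining $f$, the involution $\iota$ lifts to an automorphism $\tilde\iota$ of $\tC$ satisfying $\tilde\iota\sigma\tilde\iota^{-1}=\sigma^{-1}$. A direct analysis of $\tilde\iota$ acting on the $\langle\sigma\rangle$-torsor above each Weierstrass point of $C$ shows it acts as a reflection with exactly one fixed point; since $\tilde\iota^2\in\langle\sigma\rangle$ would act freely on $\tC$ unless trivial, one forces $\tilde\iota^2=1$. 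Hence $\langle\sigma,\tilde\iota\rangle\cong D_7$, and the quotient picture $\tC\ra\tC/\langle\tilde\iota\rangle=Y$, $Y\ra\tC/D_7=\PP^1$ is precisely the covering constructed in Section~3. Since $7$ is odd, all reflections in $D_7$ are conjugate, so $\bar f$ depends only on the class $[f]$.

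The backward map $\Psi$ takes such $\bar f$, forms the Galois closure $\tC\ra\PP^1$ with monodromy group $G\subset S_7$, and sets $C:=\tC/\langle\sigma\rangle$ where $\sigma$ generates the Sylow-$7$ subgroup of $G$. For $[\tC\ra C]\in\cR_{2,7}$ one needs $G=D_7$: the local monodromies, being odd involutions of cycle type $(2,2,2,1)$, immediately exclude $\ZZ/7$ and $A_7$, and the main technical step is to rule out, for generic branch points, the remaining larger transitive subgroups of $S_7$ containing such involutions. I would handle this by a Hurwitz-space dimension count, using $\dim\cR_{2,7}=3$: covers with strictly larger Galois group form a proper closed sublocus of smaller dimension, so generically $G=D_7$. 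Granted this, Riemann--Hurwitz gives $g(C)=2$ and $\tC\ra C$ étale cyclic of degree~$7$.

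To verify $\Phi$ and $\Psi$ are inverse, observe that $\tC\ra\PP^1$ is by construction the Galois closure of the $Y\ra\PP^1$ produced by $\Phi$, and the unique cyclic $7$-subgroup of $D_7$ reproduces $C$; conversely, any reflection in $D_7$ recovers $Y=\tC/\langle\tilde\iota\rangle$ together with its map to $\PP^1$. Finally, to confirm $\Psi(\bar f)\in\pr^{-1}([P,\Xi])$, one applies Proposition~\ref{realmult} to the reconstructed cover and invokes the uniqueness of $Y$ there, which identifies the polarized abelian variety with $\ZZ/7$-action as the starting $[P,\Xi]$. The main obstacle is thus the identification of the monodromy group $G$ with $D_7$ for generic branch data.
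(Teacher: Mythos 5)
Your overall architecture --- quotient by a lifted involution for the forward map, Galois closure for the inverse, and uniqueness of the factor $Y$ in $P\cong JY\times JY$ to pin down the target curve --- is essentially the paper's proof (note the uniqueness you need is Theorem \ref{unique}, not Proposition \ref{realmult}, which only gives real multiplication). The genuine problem is the step you yourself single out as the main obstacle: your proposed fix does not work. The claim that ``covers with strictly larger Galois group form a proper closed sublocus of smaller dimension'' is false. A Hurwitz space of degree-$7$ covers of $\PP^1$ with $6$ branch points and prescribed local monodromy type has dimension $6-3=3$ \emph{regardless} of the monodromy group, and it is finite \'etale over the configuration space of branch points. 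Since one can exhibit six type-$(2,2,2,1)$ involutions generating $S_7$ (and likewise $L_7=AGL_1(\FF_7)$) with product $1$, \emph{every} general configuration of $6$ points carries covers of the stated ramification type with monodromy strictly larger than $D_7$; no dimension count over the branch locus can exclude them. What is actually needed is a statement about the fixed source curve: that a generic $Y\in\cC_3$ admits no $(2,2,2,1)$-cover with monodromy $S_7$ or $L_7$, i.e.\ that $\cC_3$ is not contained in the (also $3$-dimensional) images in $\cM_3$ of those other Hurwitz spaces. The paper sidesteps this by carrying the hypothesis ``monodromy group $D_7$'' through Proposition \ref{prop3.5} and Corollary \ref{cor3.8}; your map $\Psi$, defined on \emph{all} covers of the stated ramification type, is not well-defined without such a restriction or such an argument.

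A second, smaller gap: at the end you must show $\Psi(\overline f)$ lands in the fiber over the given $[P,\Xi]\in\cB_D$, which means an isomorphism of \emph{polarized} abelian varieties compatible with the order-$7$ automorphism, not merely $P(\Psi(\overline f))\cong JY\times JY$ as abelian varieties; Proposition \ref{realmult} gives neither, and you need Proposition \ref{polarization} together with Theorem \ref{unique} (this point is also treated lightly in the paper, via Corollary \ref{cor3.8}). On the positive side, your Galois-closure construction inverts $\Phi$ directly, so you genuinely do not need the enumeration $(7^5-7)/42=400$ of Proposition \ref{prop3.5} that the paper uses to see that distinct elements of a fiber lie over distinct genus-$2$ curves; that part of your argument is sound.
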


\begin{rem}
In \cite{lo2} we considered a partial compactification of $\pr$ which is proper and generically finite of degree 10. So, a finite fibre consists of 10 elements counted 
with multiplicities. This fact together with Theorem \ref{main-thm} 
shows that the number of degree 7 coverings $\overline f: Y \ra \PP^1$ with ramification type $(2,2,2,1)$ 
over each of the 6 ramification points is $\leq$ 10.
\end{rem}

 \bigskip
 
{\it Acknowledgements}. We would like to thank Alice Silverberg and Karl Rubin for their
help with the proof of Lemma 2.7.

\section{Description of the Prym variety as product of Jacobians}

Let $C$ be a genus 2 curve and $f:\tC \ra C$ a non-trivial \'etale cyclic covering of degree 7.
We shall give an explicit description of the associated Prym variety $P(f)$. Part of the results in this section are contained in \cite{o}.  
It is known that  the hyperelliptic involution $\iota$ of $C$ lifts, though non-canonically, to an involution $j:\tC \ra \tC$.  If we denote by $\sigma$ 
the automorphism of order 7 on the cover $\tC$,
then $j,\sigma$ generate the dihedral group 
$$
D_7= \langle j, \sigma \ \mid \ j^2=\sigma^7 =1, j\sigma j= \sigma^{-1}  \rangle.
$$
Moreover, the composed map $\tC \ra \PP^1$ is Galois with group $D_7$ (\cite[Proposition 2.1]{bl2}). Using the commutativity of the diagram
\begin{equation} \label{diag1}
\xymatrix{
\tC \ar[d]_f \ar[r]^{j} &  \tC \ar[d]^f  \\
C \ar[r]^{\iota} &  C
}
\end{equation}
one checks that the images of the fixed points of $j$ are precisely the fixed points of  $\iota$ and, since $f$ is \'etale, there is only one fixed 
point  on each fiber of a fixed point of $\iota$. Let 
$$
g: \tC \ra Y:= \tC/ \langle j \rangle
$$ 
be the double covering  associated to the involution $j$. Then $g$ is 
ramified exactly at the 6 fixed points of $j$, so by Hurwitz formula $Y$ is of genus 3.  Notice that since $g$ is a ramified cover, the map 
$g^*: JY \ra J\tC$ is injective, so we can regard  $JY$ as subvariety in $J\tC$.  As before, let $P=P(f)$ be the Prym variety associated to $f$. 

\begin{prop} \label{2.1}
The map 
$$
\begin{array}{ccc}
\psi: JY  \times JY& \ra & P \\
\qquad (x,y) &  \mapsto &  x+\sigma(y) 
\end{array}
$$
is an isomorphism  of abelian varieties.
\end{prop}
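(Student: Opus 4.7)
Plan: The proof will proceed in three steps: showing $\psi$ maps into $P$, checking that $\ker\psi = 0$, and concluding by a dimension count. For the first, note that $P$ is $\sigma$-stable (since $\sigma$ commutes with $N_\sigma := 1 + \sigma + \sigma^2 + \cdots + \sigma^6 = \Nm_f$), so it suffices to verify $g^*(JY) \subset P$. Using the standard identification $g^*(JY) = (1+j)J\tC$ as abelian subvarieties, I would compute $N_\sigma \circ (1+j)$ on $J\tC$. From the commutativity $f \circ j = \iota \circ f$ of diagram (\ref{diag1}) one has $(1+j)f^* = f^*(1+\iota)$; since the hyperelliptic involution $\iota$ acts as $-1$ on $JC$, this pullback vanishes. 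Combined with the Galois relation $f^* f_* = N_\sigma$ and the commutation $N_\sigma(1+j) = (1+j)N_\sigma$ (which holds because $j$ normalizes $\langle \sigma\rangle$), this yields
$N_\sigma \cdot (1+j) J\tC = (1+j)\, f^* f_*\, J\tC \subset (1+j) f^* JC = 0$.
Hence $g^*(JY) \subset \ker N_\sigma$, and by connectedness $g^*(JY) \subset P$.

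Second, I would show $\ker \psi = 0$. Since $g^*$ is injective ($g$ being ramified), the first projection identifies $\ker \psi$ with $g^*(JY) \cap \sigma g^*(JY) \subset J\tC$. Any $z$ in this intersection is fixed by $j$ (as $z \in g^*(JY)$) and by $\sigma j \sigma^{-1} = \sigma^2 j$ (as $z \in \sigma g^*(JY)$); since $\sigma^2$ generates $\langle \sigma \rangle$, the subgroup $\langle j, \sigma^2 j \rangle = \langle j, \sigma \rangle$ equals all of $D_7$, so $z \in J\tC^{D_7}$. The standard exact sequence of Galois cohomology for the cover $\tC \to \PP^1$ (with $\Pic^0(\PP^1) = 0$) gives an injection $J\tC^{D_7} \hookrightarrow \widehat{D_7^{\text{ab}}} = \widehat{\ZZ/2}$; in particular $J\tC^{D_7}$ is killed by $2$. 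On the other hand $z \in P$ forces $N_\sigma z = 0$, while $\sigma z = z$ gives $N_\sigma z = 7z$, so $7z = 0$. An element killed by both $2$ and $7$ must vanish, hence $z = 0$.

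Finally, since $JY \times JY$ and $P$ both have dimension $6$, a homomorphism of abelian varieties with trivial kernel between them is an isomorphism. The step requiring most care is the identification of $\ker\psi$ with a subgroup of $J\tC^{D_7}$ together with the cohomological bound making $J\tC^{D_7}$ $2$-torsion; once these are in place, the $2$-versus-$7$ torsion argument makes the vanishing immediate.
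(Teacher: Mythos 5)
Your overall architecture coincides with the paper's (show that $g^*(JY)$ and $\sigma g^*(JY)$ lie in $P$, prove injectivity of $\psi$, conclude by the dimension count), and your first and third steps are correct. The genuine problem is the cohomological bound in your second step. The covering $h\circ f\colon \tC\ra\PP^1$ is \emph{ramified} (over each of the six branch points of $h$ its fibre consists of seven points of multiplicity $2$), so the Hochschild--Serre type sequence you invoke, which would give $J\tC^{D_7}\hra \Hom(D_7,\CC^*)\cong\ZZ/2\ZZ$, is only valid for \'etale Galois covers. In fact the claimed injection is false here: for any $z\in JC[2]$ the point $f^*(z)$ is fixed by $\sigma$ (as $f\circ\sigma=f$) and by $j$ (since $jf^*(z)=f^*(\iota z)=-f^*(z)=f^*(z)$), and $f^*$ is injective on $JC[2]$ because $\Ker f^*$ is the cyclic group of order $7$ generated by the torsion point defining $f$; hence $J\tC^{D_7}\supseteq f^*\bigl(JC[2]\bigr)\cong(\ZZ/2\ZZ)^4$, which cannot embed into $\ZZ/2\ZZ$.

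The statement you actually need --- that $J\tC^{D_7}$ is killed by $2$ --- is nevertheless true, but it must be proved differently, essentially as in the paper: since $f$ is \'etale cyclic, $\Fix(\sigma)=f^*(JC)$, so a $D_7$-invariant point $z$ equals $f^*(w)$ for some $w\in JC$, and then $z=j(z)=f^*(\iota w)=-f^*(w)=-z$ because the hyperelliptic involution acts as $-1$ on $JC$. With that substitution your $2$-versus-$7$ torsion argument closes the proof, and your reduction of $\Ker\psi$ to $g^*(JY)\cap\sigma g^*(JY)\subset J\tC^{D_7}$ is otherwise a clean repackaging of the paper's computation. As written, however, the justification of the key $2$-torsion bound is a gap.
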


Certainly $\psi$ does not respect the canonical polarizations, since $JY \times JY$ is canonically principally polarized whereas the induced 
polarization $\Xi$ on $P$ is of type $(1,\dots,1,7)$.

\begin{proof}
First we check that $JY$ and  $\sigma(JY)$ are contained in $ P$.  Recall that $P = \Ker(1 + \sigma +\cdots + \sigma^6)^0$ and 
$g^*(JY) = \im (1+j)$.  On $J\tC$
$$
(1 + \cdots + \sigma^6)(1+j)= 1 + \cdots + \sigma^6+ j + \cdots + j\sigma^6=0
$$
since this map is the the norm map of $\tC \ra \PP^1$. This shows that $JY \subset P$ and as $\sigma$ acts on $P$, we also 
have $\sigma(JY) \subset P$. 

Note that $\dim (JY \times JY) = \dim P$, so in order to prove the proposition it suffices to show that $\psi$
is injective. Recall that, since $Y=\tC / \langle j \rangle$, the elements of  $JY \simeq g^*JY$ are fixed by $j$.  
Let $(x,y) \in JY \times JY$ such that $x+\sigma (y) =0 $. Then 
$$
x= j(x) = -j\sigma (y) = -\sigma^6 j(y)= -\sigma^6(y)
$$
and hence $\sigma^2(x) = - \sigma (y) = x$. This shows that $x \in \Fix (j, \sigma)$. In particular $x\in \Fix(\sigma) \cap P \subset J\tC[7]$.
Since $\sigma (x) =x$ and $f$ is \'etale there exists an element $z \in JC$
such that $x=f^*(z)$. Further, $j(x)=x$ and the commutativity of diagram \eqref{diag1} implies that 
$$
x= j(x)= jf^*(z) =f^*\iota (z) = -f^* (z) = -x
$$ 
since on $JC$ we have $\iota (z) = -z $, that is $x \in J\tC[2]$. In conclusion $x \in J\tC[2] \cap J\tC[7] = \{0\}$ and therefore $\psi$ is injective.
\end{proof}

\bigskip 

In order to describe the polarization on the product let  $\Xi$ denote the polarization of type $(1,1,1,1,1,7)$ on $P$. Hence
its pull back $\psi^*\Xi$ is also  of type $(1,1,1,1,1,7)$. Now $\Xi$ is the restriction of the canonical 
polarization of $J\tC$ to $P$. So we may consider the polarization $\psi^*\Xi$ as the pullback 
of the canonical polarization of $J\tC$ to $JY \times JY$.

We identify $\widehat {J\tC} = J\tC$ and $\widehat {JY} = JY$ via the canonical 
polarizations. The induced split polarization on $JY \times JY$ gives an identification 
$(JY \times JY)^{\wedge} = JY \times JY$.
Let $\theta: JY \times JY \stackrel{\psi}{\ra} P \hra J \tC$ be the embedding. For $i = 1, 2$ denote by $\theta_i$ the composition
$$
\theta_i: JY \ra JY \times JY \stackrel{\theta}{\ra} J\tC,
$$
where the first map is the natural embedding into the $i$-th factor.  One checks  that 
$$
\theta_1 = g^*: JY \ra J\tC \quad \mbox{and} \quad \theta_2 = \sigma \circ g^*: JY \ra J\tC.
$$
Since the dual of $g^*$ is the norm map  
$$
N_g = 1 +j: J\tC \ra JY,
$$
 this implies that
$$
\widehat {\theta_1}=\widehat {g^*} = N_{g} = 1+j: J\tC \ra JY \quad \mbox{and} \quad 
\widehat {\theta_2}=\widehat {g^*} \circ  \sigma^{-1} = \sigma^6 + \sigma j: J\tC \ra JY,
$$
since $\widehat \sigma = \sigma^{-1}$.

Now note that $\widehat \theta_1 \theta_1 = N_g  g^* = 2_{JY}$ and 
$\widehat \theta_2 \theta_2 = N_g \sigma^{-1} \sigma g^* = 2_{JY}$.
Hence the matrix of $\phi_{\psi^*\Xi}: JY \times JY \ra JY \times JY$ is 
$$
\phi_{\psi^*\Xi} = \left( \begin{array}{cc}
                             \widehat {\theta_1} \theta_1 &   \widehat {\theta_1} \theta_2\\
                             \widehat {\theta_2} \theta_1 &   \widehat {\theta_2} \theta_2
                                    \end{array} \right)
= \left( \begin{array}{cc}
                              2_{JY} & N_g \sigma g^* \\
                              N_g \sigma^{-1} g^* &  2_{JY}
                                    \end{array} \right).
$$
So we have,

\begin{prop} \label{polarization}
Let $g: \tC \ra Y = \tC/ \langle j \rangle$ be the natural map and
suppose $\widehat {J\tC} = J\tC$ and $\widehat {JY} = JY$ via the canonical principal polarizations. 
If we denote by $\varphi$ the isogeny $\varphi= N_g \sigma g^*: JY \ra JY$, then the polarization $\psi^*\Xi$ on $JY \times JY$ is given by the matrix
$$
\phi_{\psi^*\Xi} =  \left( \begin{array}{cc}
                              2_{JY} & \varphi \\
                              \widehat \varphi &  2_{JY}
                                    \end{array} \right).
$$
\end{prop}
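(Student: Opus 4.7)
The proof is essentially an assembly of the computations already carried out in the paragraphs immediately preceding the statement, so my plan has three compact steps.

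First I would invoke the general formula for the polarization morphism of a pullback: if $\theta: A \ra B$ is a homomorphism of abelian varieties and $L$ is a polarization on $B$, then $\phi_{\theta^*L} = \widehat{\theta}\,\phi_L\,\theta$. Applying this to the composition $\theta: JY \times JY \stackrel{\psi}{\ra} P \hra J\tC$ with $L$ the canonical polarization on $J\tC$ (so $\phi_L = \mathrm{id}$ under $\widehat{J\tC} = J\tC$), and identifying $(JY\times JY)^{\wedge} = JY\times JY$ via the split principal polarization, the matrix of $\phi_{\psi^*\Xi}$ in the block decomposition is $(\widehat{\theta_i}\theta_j)_{i,j=1,2}$.

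Second, I would plug in the four expressions already verified above the statement, namely $\theta_1 = g^*$, $\theta_2 = \sigma \circ g^*$, $\widehat{\theta_1} = N_g$, and $\widehat{\theta_2} = N_g \circ \sigma^{-1}$. The diagonal entries reduce to $N_g g^* = 2_{JY}$, the classical formula for a double covering; the $(1,2)$-entry is $N_g \sigma g^*$, which is precisely $\varphi$ by definition.

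Third, the only point that still requires a one-line check is the identification of the $(2,1)$-entry $N_g \sigma^{-1} g^*$ with $\widehat{\varphi}$. Taking duals,
$$
\widehat{\varphi} \;=\; \widehat{g^*}\,\widehat{\sigma}\,\widehat{N_g} \;=\; N_g\,\sigma^{-1}\,g^*,
$$
using $\widehat{g^*} = N_g$, $\widehat{N_g} = g^*$, and $\widehat{\sigma} = \sigma^{-1}$, the last because $\sigma$ is an automorphism of the curve $\tC$ and hence induces an isometry of $(J\tC,\Theta)$. (Alternatively, the equality is forced by the symmetry $\phi_{\psi^*\Xi} = \widehat{\phi_{\psi^*\Xi}}$ that any polarization morphism must satisfy.)

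There is no real obstacle to this argument; the entire proof is an unpacking of the data computed just before the statement. The only subtlety worth flagging is making sure the dual identifications are coherent, in particular that $\widehat{\sigma} = \sigma^{-1}$ holds with respect to the canonical principal polarization on $J\tC$.
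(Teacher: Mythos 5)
Your proposal is correct and follows essentially the same route as the paper, which carries out exactly this computation (the formula $\phi_{\psi^*\Xi}=(\widehat{\theta_i}\theta_j)$ with $\theta_1=g^*$, $\theta_2=\sigma\circ g^*$, $\widehat{\theta_1}=N_g$, $\widehat{\theta_2}=N_g\circ\sigma^{-1}$) in the paragraphs immediately preceding the statement. Your explicit verification that $N_g\sigma^{-1}g^*=\widehat{\varphi}$ via $\widehat{g^*}=N_g$, $\widehat{N_g}=g^*$ and $\widehat{\sigma}=\sigma^{-1}$ is the one step the paper leaves implicit, and it is done correctly.
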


\bigskip

In order to study the polarizations of type $(1,\dots,1,7)$
on the product $P:= JY \times JY$,  we recall from \cite[Section 5.2]{bl}  the description of the set of polarizations of degree 7 on $P$.  
According to \cite[Theorem 5.2.4]{bl} the canonical principal polarization on $JY \times JY$ induces a bijection between the 
sets of

(a) polarizations of degree 7 on $JY \times JY$ and

(b) totally positive symmetric endomorphism of $JY \times JY$ with analytic norm 7.\\

The endomorphisms of $JY \times JY$ are given by square matrices of degree 2 with entries 
in $\End(JY)$. The symmetry in (b) is with respect to the Rosati involution. Since the Rosati
involution with respect to the canonical polarization of $JY \times JY$ is just transposition of the 
matrices, we are looking for the symmetric positive definite matrices 
$$
A:= \left( \begin{array}{cc}
                  \rho_a(\alpha) & \rho_a(\beta)\\
                  \rho_a(\beta)^t & \rho_a(\delta)
          \end{array}   \right)
$$
with determinant 7, where $\alpha, \beta, \delta \in \End(JY)$.

\begin{prop}  \label{prop4.1}
If $P$ admits a polarization of degree 7, then $\End(JY) \supsetneqq \ZZ$.
\end{prop}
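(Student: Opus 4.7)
The plan is to argue by contradiction: suppose $\End(JY) = \ZZ$ and derive that no symmetric positive definite matrix of the displayed form can have determinant $7$. Under this hypothesis each of $\alpha, \beta, \delta$ is multiplication by an integer $a, b, d \in \ZZ$, and since $\dim JY = 3$ the analytic representation sends $n \in \ZZ$ to the scalar matrix $n I_3$. The symmetry condition $\rho_a(\beta)^t = \rho_a(\beta)$ is then automatic, and the $6 \times 6$ matrix $A$ reduces to the scalar-block form
$$A = \begin{pmatrix} a I_3 & b I_3 \\ b I_3 & d I_3 \end{pmatrix}.$$

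The heart of the argument is a one-line determinant calculation. Since the four blocks of $A$ are scalar and hence pairwise commuting, the block-determinant formula gives
$$\det A = \det(ad I_3 - b^2 I_3) = (ad - b^2)^3.$$
By the bijection recalled from \cite[Theorem 5.2.4]{bl}, the existence of a polarization of degree $7$ forces $\det A = 7$; but $7$ is not the cube of an integer, so no integers $a, b, d$ can satisfy $(ad - b^2)^3 = 7$. This contradicts the assumption $\End(JY) = \ZZ$ and yields the proposition.

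There is essentially no obstacle in this argument beyond recognising that the entire weight is carried by an elementary arithmetic fact: a polarization whose degree is not a perfect cube cannot be realised by a scalar-block matrix in dimension $3$. Note in particular that the positive-definiteness hypothesis is never used, because the failure already occurs at the level of the determinant equation over $\ZZ$; what the proof really exploits is the \emph{shape} of the endomorphism matrix when $\End(JY)$ is as small as possible.
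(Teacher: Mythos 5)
Your proof is correct and follows essentially the same route as the paper: assume $\End(JY)=\ZZ$, observe that the matrix of the symmetric endomorphism becomes scalar-block of the form $\left(\begin{smallmatrix} aI_3 & bI_3 \\ bI_3 & dI_3\end{smallmatrix}\right)$, and derive the contradiction $7=(ad-b^2)^3$, which has no integer solution. No further comment is needed.
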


\begin{proof}
Suppose $\End(JY) = \ZZ$. For a general $Y$ the Rosati involution on $JY \times JY$ is 
transposition composed with the Rosati involution of the pieces $JY$, but  for a general $Y$
the Rosati involution on $JY$ is the identity.
Let $A$ be an endomorphism of degree 7, given by the matrix $A$. Then $\alpha, \beta$ and $\delta$ are integers and 
$\rho_a(\alpha) = \alpha_{\CC^3} = diag(\alpha, \alpha, \alpha)$ etc. and we have
$$
7 = \det A = \det \left( \begin{array}{cc}
                  diag(\alpha, \alpha, \alpha)  & diag(\beta,\beta,\beta)\\
                  diag(\beta,\beta, \beta) & diag(\delta,\delta, \delta)
          \end{array}   \right)
= (\alpha \delta - \beta^2)^3.
$$
Since this equation does not admit an integer solution, this give a contradiction.
\end{proof}

Recall that $\cR_{2,7}$ is an irreducible 3-dimensional variety and the Prym map is generically 
finite onto $\cB_D$. So also $\cB_D$ is of dimension 3. Since every element of $\cB_D$ is 
isomorphic to $JY \times JY$ for some curve $Y$ of genus 3, and since the number of decompositions $P = JY \times JY$ 
is at most countable, we get a 3-dimensional algebraic set, say $\cV$, of curves
$Y$ such that $JY \times JY$ admits a polarization of degree 7. In fact, as a consequence of the results in the paper 
$\cV$ is even a variety.

\begin{prop} \label{realmult}
The Jacobians $JY$ of all curves $Y \in \cV$ admit real multiplication in the totally real 
cubic number subfield $\QQ(\zeta_7)^0$ of the cyclotomic field $\QQ(\zeta_7)$.
\end{prop}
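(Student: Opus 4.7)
The plan is to exhibit an explicit endomorphism $\varphi\in\End(JY)$ that satisfies the minimal polynomial of $\zeta_7+\zeta_7^{-1}$ over $\QQ$ and is not a scalar. The natural candidate is the isogeny $\varphi=N_g\sigma g^*$ already appearing in Proposition~\ref{polarization}. The first step is to reinterpret it: for $y\in g^*JY\subset J\tC$ one has $jy=y$, so the dihedral relation $j\sigma=\sigma^{-1}j$ gives
$$
\varphi(y)=(1+j)\sigma(y)=\sigma(y)+\sigma^{-1}(y),
$$
showing that $\varphi$ is the restriction to $g^*JY$ of $\sigma+\sigma^{-1}\in\End(J\tC)$. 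Since $g^*JY\subset P=\ker(1+\sigma+\cdots+\sigma^6)^0$, the integral relation $1+\sigma+\cdots+\sigma^6=0$ holds on $JY$; dividing by $\sigma^3$ and setting $T=\sigma+\sigma^{-1}$ yields
$$
\varphi^3+\varphi^2-2\varphi-1=0 \qquad\text{in }\End(JY),
$$
which is exactly the minimal polynomial of $\zeta_7+\zeta_7^{-1}$ over $\QQ$.

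The second step is to check that $\varphi$ is not a rational integer, so that this cubic is its actual minimal polynomial. I would verify this analytically on $H^0(Y,\Omega^1)=H^0(\tC,\Omega^1)^{j^*=1}$. The $8$-dimensional space $H^0(\tC,\Omega^1)$ decomposes under $\sigma^*$ as the $2$-dimensional trivial subspace $f^*H^0(C,\Omega^1)$ together with six pairwise conjugate $1$-dimensional eigenspaces for the nontrivial seventh roots of unity. By diagram~\eqref{diag1} the involution $j^*$ acts as $-1$ on $f^*H^0(C,\Omega^1)$ (since $\iota^*=-1$ on $H^0(C,\Omega^1)$) and, via $j\sigma=\sigma^{-1}j$, exchanges the $\zeta_7^k$- and $\zeta_7^{-k}$-eigenspaces. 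Taking $j^*$-invariants therefore leaves exactly three lines, on which $\sigma+\sigma^{-1}$ acts with the three distinct eigenvalues $\zeta_7^k+\zeta_7^{-k}$ for $k=1,2,3$; so $\varphi$ has three distinct analytic eigenvalues and cannot be a scalar.

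Combining the two steps, $T\mapsto\varphi$ defines an embedding $\ZZ[\zeta_7+\zeta_7^{-1}]=\cO_{\QQ(\zeta_7)^0}\hookrightarrow\End(JY)$, which is the asserted real multiplication; compatibility with the Rosati involution of the canonical polarization of $JY$ follows from $\widehat{\sigma}=\sigma^{-1}$, whence $\widehat\varphi=\varphi$. The main obstacle is the eigenvalue bookkeeping in the second step---confirming that all three roots of the cubic genuinely occur on $H^0(Y,\Omega^1)$ rather than a proper subset; once this is settled, the remainder of the argument is formal.
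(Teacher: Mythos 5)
Your proof is correct, but it takes a genuinely different route from the paper's. The paper argues indirectly: starting from Proposition~\ref{prop4.1} ($\End_{\QQ}(JY)\neq\QQ$) it runs through the Albert classification, excluding quaternion multiplication by the parity of $\dim JY$ and complex multiplication by the countability of CM abelian varieties versus the $3$-dimensionality of $\cV$, concludes that $\End_{\QQ}(JY)$ is a totally real cubic field $F$, and finally pins down $F=\QQ(\zeta_7)^0$ because the $\QQ(\zeta_7)$-action on $P$ embeds into $M_2(F)$ only for that $F$. You instead exhibit the real multiplication explicitly: $\varphi=N_g\sigma g^*$ agrees on $g^*JY=\im(1+j)$ with $\sigma+\sigma^{-1}$ (using $j\sigma=\sigma^{-1}j$ and $j|_{\im(1+j)}=\mathrm{id}$), and the relation $1+\sigma+\cdots+\sigma^6=0$ on $P\supset g^*JY$ translates into $\varphi^3+\varphi^2-2\varphi-1=0$, the minimal polynomial of $\zeta_7+\zeta_7^{-1}$. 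Since that cubic is irreducible over $\QQ$ (it has no rational root), the minimal polynomial of $\varphi$ is forced to be the whole cubic, so your eigenvalue bookkeeping on $H^0(\tC,\Omega^1)$ is strictly speaking redundant, though it is a good confirmation that all three real embeddings occur; and $\widehat\varphi=\varphi$ does give Rosati-symmetry. Your approach buys a lot: it is constructive, needs no genericity or countability argument, works for \emph{every} covering $f$, and produces the actual order $\ZZ[\zeta_7+\zeta_7^{-1}]$ rather than just the field. Two caveats. First, your argument only gives the inclusion $\QQ(\zeta_7)^0\subseteq\End_{\QQ}(JY)$, whereas the paper's classification argument is what (generically) yields equality, which is the form used later in the proof of Theorem~\ref{unique} via $\End(P(f))\simeq M_2(\cO)$. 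Second, your construction applies to the curve $Y=\tC/\langle j\rangle$ attached to a covering, while $\cV$ as defined in the paper a priori consists of all curves occurring in \emph{some} decomposition $P\simeq JY\times JY$; covering all of $\cV$ with your method requires invoking the uniqueness of the decomposition, which in the paper comes afterwards. Neither caveat is fatal to the proposition as you use it, but both are worth stating if your argument is to replace the paper's.
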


\begin{proof}
Since by Proposition \ref{prop4.1} $\End_{\QQ}(JY) \neq \QQ$, the Jacobian admits either
real, quaternion or complex multiplication (in the more general sense of \cite[Section 9.6]{bl}).
But an abelian variety with quaternion multiplication is even dimensional (\cite[9.4 and 9.5]{bl}).
If $JY$ admits complex multiplication by a skew field of degree $d^2$ over a totally complex
quadratic extension of a totally real number field of degree $e_0$, we would have 
(see \cite[9.6]{bl}) $3= d^2 e_0 m$ for some integer $m \geq 1$. So $d=1$ and
$e_0 = 3$ (by Proposition \ref{prop4.1}). Then $JY$ admits complex multiplication by a number 
field.
Since there are only countably many such abelian varieties, we conclude that $JY$ admits 
multiplication by a totally real number field $F$ of degree say $e$.

Then we have $3 = em$ for some positive integer $m$ \cite[\S 9.2]{bl} and Proposition 
\ref{prop4.1} implies $e=3$.  So $JY \times JY$ admits multiplication in $\mbox{SL}_2(F)$, with $F$ a totally real 
cubic number field. On the other hand $P$ admits a multiplication in the cyclotomic 
field $\QQ(\zeta_7)$ and this is a subfield of $\mbox{SL}_2(F)$ only if $F$ is the totally real cubic subfield $\QQ(\zeta_7)^0$ in 
$\QQ(\zeta_7)$. Therefore $F=\QQ(\zeta_7)^0$.

\end{proof}

As a consequence of the description of the polarization in  Proposition \ref{2.1} we have

\begin{prop}
Let $\cO$ denote the maximal order of the totally real cubic field $\End_{\QQ}(JY)$ and 
$\varphi: JY \ra JY$ the isogeny of Proposition \ref{polarization}. The following equation admits a 
solution in $\cO$
$$
7 = \det (4 \cdot \bf 1 - \rho_a( \varphi \widehat \varphi )).
$$

\end{prop}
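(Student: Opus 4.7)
The plan is to extract the identity directly from the matrix description of $\phi_{\psi^*\Xi}$ given in Proposition \ref{polarization}, by computing the determinant of its analytic representation in two different ways.

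First, since $\psi^*\Xi$ is a polarization of type $(1,1,1,1,1,7)$ on the $6$-dimensional abelian variety $JY\times JY$, the general relationship between a polarization and the determinant of its analytic representation gives $\det \rho_a(\phi_{\psi^*\Xi}) = 7$. The sign is $+7$ rather than $-7$ because the Hermitian form attached to any polarization is positive definite and the product of the elementary divisors is intrinsically positive.

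Second, applying $\rho_a$ blockwise to
$$
\phi_{\psi^*\Xi} = \left( \begin{array}{cc} 2_{JY} & \varphi \\ \widehat\varphi & 2_{JY} \end{array}\right)
$$
produces a $6\times 6$ complex matrix whose diagonal $3\times 3$ blocks are the scalar matrix $2\cdot\mathbf{1}$. Because scalar blocks commute with everything, the elementary formula for the determinant of a $2\times 2$ block matrix applies and gives
$$
\det \rho_a(\phi_{\psi^*\Xi}) = \det\bigl(4\cdot\mathbf{1} - \rho_a(\varphi)\rho_a(\widehat\varphi)\bigr) = \det\bigl(4\cdot\mathbf{1} - \rho_a(\varphi\widehat\varphi)\bigr),
$$
the last equality because $\rho_a$ is a ring homomorphism on $\End(JY)$. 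Equating the two computations yields the claimed identity.

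It remains to note that $\varphi\widehat\varphi$ does lie in $\cO$: the endomorphism $\varphi = N_g \sigma g^*$ belongs to $\End(JY)$, and $\widehat\varphi$, viewed via the identification $\widehat{JY}=JY$ supplied by the canonical polarization, is the Rosati image of $\varphi$ and hence again in $\End(JY)\subseteq\cO$. The only subtle point is the determination of the correct sign $+7$ in the degree formula; the rest of the argument is a direct block-matrix computation exploiting the fact that the diagonal entries of $\phi_{\psi^*\Xi}$ are scalar multiples of the identity.
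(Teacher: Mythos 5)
Your argument is correct: the paper states this proposition without proof (as an immediate consequence of Proposition \ref{polarization} and the correspondence, quoted from \cite[Theorem 5.2.4]{bl}, between degree-$7$ polarizations and totally positive symmetric endomorphisms of analytic norm $7$), and your computation --- analytic norm equals $7$ on one side, Schur-complement/block-determinant evaluation of $\rho_a(\phi_{\psi^*\Xi})$ with scalar diagonal blocks on the other, plus the observation that $\varphi\widehat\varphi$ lands in $\End(JY)\subseteq\cO$ --- is exactly the intended route. No gaps.
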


\begin{thm} \label{unique}
For a general covering $f$ the decomposition $P(f) \simeq JY \times JY$ is unique up to automorphisms. 
\end{thm}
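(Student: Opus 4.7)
The plan is to combine the generic simplicity of $JY$ with the structure of $\End^0(P)$, and conclude via the Torelli theorem.

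First, I would show that for a generic $f \in \cR_{2,7}$, the inclusion $F := \QQ(\zeta_7)^0 \hookrightarrow \End^0(JY)$ from Proposition \ref{realmult} is an equality. Since $\cV$ is 3-dimensional and the sublocus of $Y \in \cV$ whose Jacobian admits strictly larger endomorphism algebra (real multiplication by a larger totally real field, quaternion multiplication, or complex multiplication) is a countable union of proper subvarieties, generically one has $\End^0(JY) = F$. As $F$ is a field, $JY$ is then simple and $\End^0(P) = M_2(F)$ is a central simple $F$-algebra.

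Next, suppose $P(f) \simeq JY_1 \times JY_1 \simeq JY_2 \times JY_2$ are two decompositions with $Y_1, Y_2 \in \cV$. The generic argument applies to both, so $JY_1$ and $JY_2$ are simple. By Poincar\'e's complete reducibility theorem, the simple factor of $P$ is unique up to isogeny, so $JY_1 \sim JY_2$. To upgrade to an isomorphism of polarized abelian varieties, I would exploit Proposition \ref{polarization}: the restriction of $\Xi$ to each embedded copy $JY_i \subset P$ is twice the canonical principal polarization, so the polarized structure is recovered from the subvariety alone. It therefore suffices to show that the 3-dimensional abelian subvarieties of $P$ yielding a decomposition of type $(1,\ldots,1,7)$ form a single $\Aut(P)$-orbit for generic $P$. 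These subvarieties correspond to rank-1 idempotents in $\End(P) = M_2(\cO_F)$, on which $\Aut(P) \supset GL_2(\cO_F)$ acts by conjugation; all rank-1 idempotents are $GL_2(F)$-conjugate in $M_2(F)$, and the polarization compatibility plus genericity should cut the integral $\Aut(P)$-orbit down to a single class. Torelli's theorem for genus 3 then yields $Y_1 \simeq Y_2$.

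The main obstacle is precisely this last step: upgrading the isogeny $JY_1 \sim JY_2$ to an isomorphism of polarized abelian varieties. The fact that $\cO_F = \ZZ[\zeta_7+\zeta_7^{-1}]$ has class number 1 ensures that the isogeny class of $JY$ as an $\cO_F$-module is represented by a unique isomorphism class; combined with the rigidity imposed by the polarization type $(1,\ldots,1,7)$ and the dimension equality $\dim \cV = \dim \cB_D = 3$, this should force the $\Aut(P)$-orbit of decompositions to be unique generically, completing the proof.
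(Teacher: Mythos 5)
Your strategy is essentially the paper's: reduce to the statement that all non-trivial idempotents of $M_2(\cO)$ (with $\cO$ the maximal order of $F=\QQ(\zeta_7)^0$) are conjugate, and invoke the fact that $\cO$ has class number one. But the decisive step is exactly the one you leave as an assertion: you observe that rank-one idempotents are $GL_2(F)$-conjugate and then say that class number one plus ``polarization compatibility plus genericity \emph{should} cut the integral $\Aut(P)$-orbit down to a single class.'' That integral conjugacy is the entire content of the theorem, and it is not a consequence of the rational statement by soft arguments; it needs a proof. The paper supplies one in a short lemma: for a non-trivial idempotent $\epsilon$ acting on $M=\cO^2$, the $0$-eigenspace $N_0=\Ker\epsilon$ and the $1$-eigenspace $N_1=\Ker(\epsilon-1)$ satisfy $N_0\cap N_1=0$ and $N_0+N_1=M$ (since $m=\epsilon m-(\epsilon-1)m$ and $\epsilon^2=\epsilon$), neither is zero or all of $M$, and because $\cO$ is a principal ideal domain each is free of rank one, so $M$ has an $\cO$-basis in which $\epsilon=\mathrm{diag}(0,1)$. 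That single computation is what you are missing, and without it the proof is incomplete.

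Two further remarks. First, the auxiliary machinery you bring in --- Poincar\'e reducibility to get $JY_1\sim JY_2$, the claim that $\Xi$ restricts to twice the principal polarization on each factor, and Torelli to recover $Y$ --- is not needed once the idempotent conjugacy is established: conjugate idempotents directly give direct factors identified by an automorphism of $P(f)$, which is what the theorem asserts (the paper's statement is about the decomposition up to automorphisms of $P(f)$, not about matching polarized structures by hand). Second, your opening step (generically $\End^0(JY)=F$, hence $\End(P)\simeq M_2(\cO)$) matches the paper's reduction, which justifies $\End(JY)=\cO$ by comparing the two irreducible three-dimensional families; that part of your argument is fine.
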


\begin{proof}
We may assume that the the ring of endomorphisms of $JY$ is the maximal order $\cO$ of the 
field $F$, since both families, the family of Prym varieties $P(f)$ and the family of Jacobians 
with multiplication in $F$ are irreducible of the same dimension 3. So 
$$
\End(P(f)) \simeq M_2(\cO),
$$
the ring of matrices of degree 2 with entries in $\cO$.  Let $A$ be any direct factor of $P(f)$.
So there is an abelian subvariety $B$ of $P(f)$ (necessarily isogenous to $A$) with
$$
P(f)  \simeq A \times B.
$$
We have to show that there is an automorphism $\alpha$ of $P(f)$ with $\alpha(JY) = A$. 

Recall that an element $\epsilon \in  \End(P(f))$ is idempotent if $\epsilon^2 = \epsilon$.
Now the direct factors of $P(f)$ correspond bijectively to the non-trivial idempotents of 
the endomorphism ring of $P(f)$, i.e. of $M_2(\cO)$. Namely, if $A$ is a direct factor, 
then the composition $\epsilon$ of the maps
$$
P(f) \simeq A \times B \stackrel{p_1}{\ra} A \stackrel{i_1}{\ra} A \times B \simeq P(f)
$$
is an idempotent of $\End(P(f))$. Here $p_1$ and $i_1$ are the natural projection and inclusion.
Conversely, if $\epsilon$ is an idempotent, the factor $A$ is given by the kernel of the 
endomorphism 
$1 -\epsilon$. Moreover, if $\epsilon' = \alpha \epsilon \alpha^{-1}$ with an automorphism 
$\alpha$ 
of $P(f)$, then 
$$
\alpha(1 -\epsilon) \alpha^{-1} = 1 - \epsilon'.
$$ 
Hence $\epsilon$ and $\epsilon'$ correspond to isomorphic direct factors. It suffices to show 
that all 
nontrivial idempotents (i.e. different from 0 and 1) are conjugate to each other, which is the 
content of the following lemma.
\end{proof}

\begin{lem}
Any $2$ nontrivial idempotents of $M_2(\cO)$ are conjugate to each other.
\end{lem}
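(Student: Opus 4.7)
The plan is to show that every nontrivial idempotent $e \in M_2(\cO)$ is conjugate in $M_2(\cO)$ to the standard idempotent
$$
e_0 = \begin{pmatrix} 1 & 0 \\ 0 & 0 \end{pmatrix};
$$
the lemma then follows immediately. Viewing $\cO^2$ as a free $\cO$-module of rank $2$ on which $M_2(\cO)$ acts on the left, an idempotent $e$ with $e^2 = e$ yields a direct sum decomposition
$$
\cO^2 \;=\; \Ker(e) \oplus \Ima(e).
$$
When $e$ is neither $0$ nor $1$ both summands are nonzero projective $\cO$-modules whose ranks sum to $2$, hence each has rank exactly one.

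Next I would observe that two idempotents $e_1, e_2$ are conjugate by some $\alpha \in GL_2(\cO)$ precisely when $\alpha$ carries the decomposition attached to $e_1$ onto the one attached to $e_2$. Thus to conjugate $e$ to $e_0$ it suffices to produce an $\cO$-basis $(v_1, v_2)$ of $\cO^2$ with $v_1 \in \Ima(e)$ and $v_2 \in \Ker(e)$: the matrix $\alpha \in GL_2(\cO)$ sending the standard basis to $(v_1, v_2)$ then satisfies $\alpha e_0 \alpha^{-1} = e$. Such a basis exists precisely when both $\Ima(e)$ and $\Ker(e)$ are \emph{free} $\cO$-modules of rank $1$, and the freeness of every rank-$1$ projective $\cO$-module is equivalent to $\cO$ being a principal ideal domain.

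Hence the entire lemma reduces to the arithmetic statement that $\cO$ is a PID. Since $\cO$ is the ring of integers of the totally real cubic field $\QQ(\zeta_7)^0 = \QQ(\zeta_7 + \zeta_7^{-1})$, this amounts to the classical fact $h(\QQ(\zeta_7)^0) = 1$, which I would invoke as the arithmetic input --- presumably the point on which the authors credit Silverberg and Rubin. This is the main obstacle: without class number $1$, Steinitz's theorem would allow decompositions $\cO^2 \cong I \oplus I^{-1}$ with $I$ non-principal, producing genuinely inequivalent conjugacy classes of nontrivial idempotents indexed (up to swap) by $\mathrm{Cl}(\cO)$, and the lemma would fail. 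Once $\cO$ is known to be a PID, the remaining argument is pure linear algebra.
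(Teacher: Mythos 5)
Your argument is correct and is essentially the paper's own proof: both reduce a nontrivial idempotent to $\left( \begin{smallmatrix} 1&0\\0&0 \end{smallmatrix} \right)$ by splitting $\cO^2$ into the two eigen-summands (your $\Ima(e)$ is their $1$-eigenspace $N_1=\Ker(e-1)$), observing each has rank one, and choosing a basis adapted to the splitting, which exists because $\cO$ is a principal ideal domain. The only difference is presentational: you isolate the arithmetic input $h(\QQ(\zeta_7)^0)=1$ explicitly, whereas the paper invokes the PID property of $\cO$ without comment.
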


\begin{proof}
Let $\epsilon \in M_2(\cO)$ be an idempotent. Its minimal polynomial $p(x)$ is different from
$x$ and $x-1$, since $\epsilon$ is nontrivial. 

Let $M := \cO^2$ with $\epsilon$ acting by multiplication in the natural way. Let $N_0$ be its
$0$-eigenspace, i.e. the kernel of $\epsilon$ and $N_1$ its 1-eigenspace, i.e. the kernel of 
$\epsilon -1$. We have
$$
\epsilon M \subset N_1 \quad \mbox{and} \quad (\epsilon -1) M \subset N_0,
$$
since $\epsilon^2 - \epsilon$ annihilates $M$. Moreover,
\begin{equation} \label{e7.1}
N_0 \cap N_1 = 0
\end{equation}  
\begin{equation} \label{e7.2}
N_0 + N_1 = M.
\end{equation}
The first equation is clear. For the second equation note that every $m \in M$ can be expressed
as $m = \epsilon m - (\epsilon -1)m$.

Neither $N_0$ nor $N_1$ can equal $M$, since otherwise we would have $p(x) = x$ or $x-1$.
Hence by \eqref{e7.2} neither can be zero, so $N_0$ and $N_1$ are both $\cO$-modules
of rank 1. Since $\cO$ is a principal ideal domain, there are $m_0, m_1 \subset M$ such that
$$
N_0 = \cO m_0 \quad \mbox{and} \quad N_1 = \cO m_1.
$$
It follows from \eqref{e7.1} and \eqref{e7.2} that $\{m_0, m_1\}$ is an $\cO$-basis of $M$.
With respect to this basis $\epsilon$ has the form 
$$
\left( \begin{array}{cc}
       0&0\\
       0&1
       \end{array}  \right).
$$
This implies the assertion.
\end{proof}

\begin{rem}
The proof works more generally for $M_2(\cO)$ with $\cO$ any principal ideal domain.
\end{rem}

\section{The number of isomorphism classes of coverings $\overline f$}

Let $f: \tC \ra C$ be an \'etale cyclic covering of degree 7 of a smooth curve of genus 2.  As we saw in Section 2,
the lifting $j$ of the hyperelliptic involution $\iota$ of $C$ is not unique and in fact 
there are exactly 7 liftings $j_0, \dots,j_6$ which are the 7 involutions of $D_7$.
If $g_i: \tC \ra Y_i := \tC/\langle j_i \rangle$ denotes the 
quotient map given by the involution $j_i:=  j\sigma^i$ (so $g_0=g$) we have the following cartesian diagram.
\begin{equation} \label{eq10.1}
\xymatrix{
& \tC \ar[dl]_f \ar[dr]^{g_i} & \\
C \ar[dr]_h  & &Y_i \ar[dl]^{\overline f_i}  \\
&  \PP^1 &
}
\end{equation}
In the previous section we saw that $Y_i$ is of genus 3 and $g_i$ 
is ramified exactly at one point over each Weierstrass point of $C$. Since $f$ is \'etale, the 
commutativity of \eqref{eq10.1} implies that the branch points of $\overline f_i:Y_i \ra \PP^1$
coincide with the branch points of $h$ and each branch point is of ramification type 
$(2,2,2,1)$, i.e $\overline f_i$ is simply ramified at 3 points and unramified at 1 point over the 
branch point. So the branch points of $\overline f_i$ coincide for all $i$. 
If $p_1, \dots, p_6 \in \PP^1$
are the branch points, then the permutations corresponding to the fibers 
$(\overline f_i)^{-1}(p_j)$
are pairwise conjugate within $D_7$ for all $i=0,\dots,6$ and fixed $j$ with the same conjugation 
for  
$j = 1, \dots,6$ and such that the non-ramified points of $f_j$ over each $p_i$ are pairwise 
different 
for $j = 0, \dots,6$.

Note that the coverings $\overline f_i$ correspond to conjugate subgroups of $D_7$.
Hence the $\overline f_i: Y_i \ra \PP^1$ are isomorphic coverings.

\begin{lem} \label{lem10.1}
For any general smooth curve $C$ of genus $2$ with double covering $h: C \ra \PP^1$, there is a 
canonical bijection between the sets of
\begin{enumerate}
\item[(a)] isomorphism classes of \'etale cyclic coverings $f:\tC \ra C$ of degree $7$,
\item[(b)] isomorphism classes of degree-$7$ coverings $\overline f:Y \ra \PP^1$ of 
ramification type $(2,2,2,1)$  over each of the 6 ramification points.
\end{enumerate}
\end{lem}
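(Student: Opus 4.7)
The forward map $(a)\to(b)$ is the construction set up just above the lemma: for each lift $j_i$ of $\iota$, set $Y_i := \tC/\langle j_i\rangle$ and take $\overline{f}_i: Y_i \to \PP^1$. Since the subgroups $\langle j_i\rangle$ of $D_7$ are pairwise conjugate, the $\overline{f}_i$ are all isomorphic as coverings of $\PP^1$, so the map is well defined.

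For the inverse, take $[\overline{f}: Y \to \PP^1] \in (b)$ and let $\pi: \tC \to \PP^1$ denote the Galois closure of $\overline{f}$ inside a fixed algebraic closure of $k(\PP^1)$, with Galois group $G = \Gal(\tC/\PP^1)$. I claim $G \cong D_7$. Assuming this, fix a generator $\sigma \in G$ of order $7$. The quotient $\tC/\langle\sigma\rangle \to \PP^1$ is a double cover branched precisely over the six points $p_i$ (each local monodromy is an involution, hence projects non-trivially to $G/\langle\sigma\rangle \cong \ZZ/2$), and is therefore isomorphic to the genus-$2$ curve $C$ by uniqueness of double covers with a prescribed branch locus. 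The resulting map $\tC \to C$ is Galois cyclic of degree $7$, and is \'etale because the ramification indices of $\tC \to \PP^1$ are at most $2$ while $|\sigma| = 7$, so the stabilizer of any point inside $\langle\sigma\rangle$ is trivial. This produces the desired element of $(a)$.

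To see that these constructions are mutually inverse, note first that the forward map is injective: any isomorphism $Y_1 \simeq Y_2$ over $\PP^1$ extends uniquely to an isomorphism $\tC_1 \simeq \tC_2$ of Galois closures, which preserves the fixed subfield of the unique $\ZZ/7$-subgroup of $D_7$ and hence induces an automorphism of $k(C)/k(\PP^1)$; this automorphism is either trivial or $\iota$, and after composing with a suitable lift of $\iota$ to $\tC_2$ we obtain an isomorphism over $C$. Conversely, by Galois theory the Galois closure of $\overline{f}_i$ recovers $\tC$, and the quotient by any $\langle j \rangle$ recovers $Y$, so the round trips in both directions are identities.

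The main obstacle is the Galois-group claim $G \cong D_7$. A cycle-type analysis shows that the transitive subgroups of $S_7$ containing an involution of type $(2,2,2,1)$ are only $D_7$, $\mathrm{AGL}(1,7)$ and $S_7$ (indeed $A_7$ is ruled out by parity, and $\mathrm{PSL}_2(7)$ because its involutions have cycle type $(2,2,1,1,1)$). To rule out the larger groups I plan a counting argument: the surjections $\rho : \pi_1(\PP^1 \setminus \{p_1, \ldots, p_6\}) \twoheadrightarrow D_7$ whose local monodromies are reflections $j\sigma^{a_i}$ and whose product is $1$ correspond to exponents $(a_1, \ldots, a_6) \in (\ZZ/7)^6$ satisfying a single linear relation, so that after removing the $7$ degenerate tuples with image in $\ZZ/2$ there are $7^5 - 7 = 16800$ such surjections; dividing by $|N_{S_7}(D_7)| = |\mathrm{AGL}(1,7)| = 42$ gives exactly $400 = (7^4-1)/6 = |(a)|$. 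Combined with injectivity this forces the forward map to surject onto every element of $(b)$ whose Galois closure is $D_7$, and the lemma reduces to the (expected) statement that for generic $C$ no $\overline{f} \in (b)$ has Galois closure strictly larger than $D_7$.
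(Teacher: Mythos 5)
Your forward map and your Galois-closure construction of the inverse are essentially the paper's proof: the paper realizes the Galois closure concretely as the fibre product $\tC := C\times_{\PP^1} Y$ and invokes Abhyankar's lemma to see that $\tC\ra C$ is \'etale, which matches your observation that the inertia groups are generated by reflections and hence meet $\langle\sigma\rangle$ trivially. The recovery of $C$ as the unique double cover of $\PP^1$ branched at $p_1,\dots,p_6$ and the injectivity argument via the characteristic subgroup $\langle\sigma\rangle\subset D_7$ are fine.

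The genuine gap is exactly the one you flag and then defer: the claim that every covering in (b) has monodromy group $D_7$. Your proposed counting argument cannot close it. Counting $D_7$-covers gives $(7^5-7)/42=400=(7^4-1)/6$, which, combined with injectivity, shows only that the forward map is a bijection onto the subset of (b) consisting of covers with monodromy $D_7$; it says nothing about covers with monodromy $L_7=\mathrm{AGL}(1,\FF_7)$ or $\cS_7$. Such covers do exist: a Hurwitz count of $6$-tuples of $(2,2,2,1)$-involutions in $\cS_7$ with product $1$ yields an enormous number of transitive tuples generating all of $\cS_7$, so for $6$ general points the set (b), read literally, is far larger than $400$, and the statement you hope to reduce to (``no $\overline f$ in (b) has Galois closure strictly larger than $D_7$'') is false. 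The lemma is correct only if (b) is restricted to coverings with monodromy group $D_7$ --- which is how the paper actually uses it (the hypothesis ``with monodromy group $D_7$'' appears explicitly in Proposition 3.8 and Corollary 3.9 of the source). The paper's own proof hides the same assumption in the assertion that $C\times_{\PP^1}Y$ ``is Galois over $\PP^1$ with Galois group $\simeq D_7$'', which fails whenever the monodromy of $\overline f$ is larger. So your construction is complete once (b) carries the $D_7$-monodromy hypothesis; without it, no counting argument will rescue the statement, and the counting you outline belongs to the separate enumeration step (the paper's Proposition 3.8), not to the proof of this bijection.
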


\begin{proof}
We saw above that any covering $f$ in (a) gives 7 coverings in (b), which are all isomorphic to 
each other. Conversely, let 
$\overline f:Y \ra \PP^1$ be one of  the coverings in (b). Since $C$ 
is general, the monodromy group $G$ of $\overline f$ coincides with the Galois group of the Galois closure of $Y \ra \PP^1$.
Define
$$
\tC := C \times_{\PP^1} Y.
$$
According to the Lemma of Abhyankar \cite[Lemma 2.14]{p} the projection $\tC \ra C$ is 
\'etale. So $\tC$ is smooth and it is easy to see that it is Galois over $\PP^1$ with Galois group 
$\simeq D_7$. 
Hence $\tC$ is the Galois closure of $\overline f$. In particular $\tC \ra C$ is cyclic
\'etale of degree 7. Clearly  both constructions are inverse to each other and isomorphic coverings in (a) correspond to isomorphic coverings in (b). 
\end{proof}

Given a covering $f: \tC \ra C$ corresponding to a general element in $\cR_{2,7}$, by Lemma \ref{lem10.1}
there is a corresponding degree 7 map $Y \ra \PP^1$ of ramification type $(2,2,2,1)$ and, according to
Proposition \ref{2.1}, the Prym variety $P(f)$ decomposes as 
$$
P(f) = JY \times JY.
$$

Now there are $7^4 -1 = 2400$ non-trivial 7-torsion points of $C$. 
Since a cyclic \'etale cover $\tC \ra C$ of degree 7 is given by a cyclic subgroup of order 7 of $JC[7]$,
there are exactly 
$$
\frac{2400}{6} = 400
$$
isomorphism classes of cyclic \'etale coverings $\tC \ra C$. 

Let $p_1, \dots,p_6$ be the branch points of the hyperelliptic covering $C \stackrel{h}{\lra} \PP^1$.
Let $N$ be the number of isomorphism classes of coverings $\overline f: Y \ra \PP^1$ ramified exactly over
$p_1, \dots, p_6$ of ramification type $(2,2,2,1)$. If we denote by $\beta: \cR_{2,7} \ra \cM_2$ the forgetful map
onto the moduli space of smooth curves of genus 2 
and the intersection $\pr^{-1}(P(f)) \cap \beta^{-1}([C]) $ consists of $d$ elements, then Lemma \ref{lem10.1} implies that
\begin{equation} \label{number-cov}
400 = d \cdot N.
\end{equation}


Let $p_1, \dots, p_6$ denote 6 points of $\PP^1$ in general position.  We want to compute the number $N$ of isomorphism classes of degree 7 
coverings ramified of ramification type 
$(2,2,2,1)$ exactly over the points $p_1,\dots, p_6$.

Let $\pi_1$ denote the monodromy group of $\overline f$ with base point 
$p \neq p_i$ for all $i=1,\dots,6$. We consider $\pi_1$ as a subgroup of the symmetric group 
$\cS_7$ of degree 7 acting on the set $\{1,\dots,7\}$. We need the following 2 trivial lemmas
(which  can of course be formulated for any odd prime $p$ instead of $7$).

\begin{lem}  \label{l2.1}
Let $a$ and $b$ be involution of $\cS_7$ such that 
$$
s:= ab
$$
is a cycle of order $7$, then
the group generated by $a$ and $b$ is isomorphic to $D_7$. Conversely, any subgroup of $\cS_7$
isomorphic to $D_7$ is of this type.
\end{lem}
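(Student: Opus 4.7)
The plan is to match the generators given in the two parts of the lemma with the standard presentation $\langle j, \sigma \mid j^2 = \sigma^7 = 1,\; j\sigma j = \sigma^{-1}\rangle$ of $D_7$, and then control the order of the resulting subgroup via a cycle-structure argument.

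For the forward direction I would set $\sigma := ab$ and $j := a$ and verify the defining relations of $D_7$. The only nonobvious relation is the twist $j\sigma j = \sigma^{-1}$, which is a one-line check: $a(ab)a = ba = (ab)^{-1}$ using $a^2 = b^2 = 1$. This produces a surjection $D_7 \twoheadrightarrow \langle a,b\rangle$. To see it is an isomorphism I need the right-hand side to have order $14$, i.e.\ $a \notin \langle\sigma\rangle$. This is immediate, since $\sigma$ is a $7$-cycle, so every nontrivial element of $\langle\sigma\rangle$ has order $7$ and cannot equal the involution $a$ (note also $a \neq 1$, for otherwise $b = \sigma$ would be simultaneously an involution and a $7$-cycle).

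For the converse, starting from a subgroup $H \leq \cS_7$ isomorphic to $D_7$ with canonical generators $\sigma$ and $j$, I would set $a := j$ and $b := j\sigma$. That $a$ is an involution is built in; $b^2 = (j\sigma)(j\sigma) = (j\sigma j)\sigma = \sigma^{-1}\sigma = 1$ by the dihedral twist relation, and $b \neq 1$ since $\sigma \neq 1$. Then $ab = j \cdot j\sigma = \sigma$ is the required element, and since $7$ is prime, an element of order $7$ in $\cS_7$ is necessarily a single $7$-cycle (its cycle decomposition has $\mathrm{lcm}$ equal to $7$, forcing a cycle of length $7$, which exhausts all $7$ letters).

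There is no real obstacle in this lemma; the argument is direct verification. The only conceptual ingredient is the observation that order-$7$ elements of $\cS_7$ are automatically $7$-cycles, which is precisely what bridges the abstract presentation of $D_7$ with the concrete cycle-type hypothesis that $s = ab$ be a $7$-cycle.
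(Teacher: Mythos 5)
Your proof is correct and follows essentially the same route as the paper: verify the dihedral relation $asa = s^{-1}$ (equivalently $a(ab)a = ba = (ab)^{-1}$) for the forward direction, and read off $a = j$, $b = j\sigma$ from the standard presentation for the converse. You are somewhat more careful than the paper, which omits both the order-$14$ check and the observation that an order-$7$ element of $\cS_7$ must be a $7$-cycle, declaring the converse ``obvious''.
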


\begin{proof}
For the first assertion we have to show that $asa=s^{-1}$. But
$$
as = aab = b = b^{-1} = b^{-1} a^{-1}a = s^{-1} a.
$$
The second assertion is obvious.
\end{proof}

\begin{lem} \label{l2.2}
Let $a$ and $b$ be involutions of $\cS_7$ such that $\langle a,b \rangle \simeq D_7$. Then $a$ and 
$b$ are products of $3$ disjoint transpositions such
that no transposition in $a$ occurs in $b$.
Equivalently $a$, respectively $b$, fixes exactly one number $i$,  respectively $j$, and $i \neq j$.
 Each of the numbers $1,\dots,7$ is fixed by exactly one of the $7$ involutions.
\end{lem}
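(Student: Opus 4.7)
The plan is to exploit Lemma \ref{l2.1}: since $\langle a,b\rangle \simeq D_7$, the product $s := ab$ has order $7$ in $\cS_7$ and hence is a single $7$-cycle, while $a$ satisfies $asa^{-1} = s^{-1}$. All three assertions of the lemma should follow from a direct analysis of involutions obeying this normalisation relation.

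I would first normalise so that $s = (1,2,3,4,5,6,7)$; the identity $a \circ s = s^{-1} \circ a$ then forces $a(i+1) \equiv a(i) - 1 \pmod 7$, so writing $k := a(1)$ yields $a(i) \equiv k+1-i \pmod 7$. The fixed-point equation $2i \equiv k+1 \pmod 7$ has a unique solution because $2$ is invertible modulo $7$. Hence $a$ has exactly one fixed point and, being an involution of $\cS_7$, must be a product of three disjoint transpositions. Applying the same reasoning to $b$ (or invoking the fact that for odd $n$ all reflections of $D_n$ are conjugate in $D_n$, hence of the same cycle type) gives the analogous statement for $b$.

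Next I would rule out a shared transposition. Suppose $(p,q)$ appears in both $a$ and $b$; since the three transpositions of each are pairwise disjoint, $(p,q)$ commutes with the other two factors on both sides. Moving the two copies of $(p,q)$ together in the product $ab$ and cancelling shows that $p$ and $q$ are fixed by $ab = s$, contradicting the fact that $s$ is a $7$-cycle.

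The final assertion follows by listing the seven involutions of $D_7$ as $a, as, as^2, \dots, as^6$: each fixes exactly one element by the first step, and if two of them $r, r'$ shared a fixed point then $rr' \in \langle s\rangle$ would fix it. Since no nontrivial power of the $7$-cycle $s$ fixes any point, this forces $r = r'$; hence the seven fixed points are pairwise distinct, and by cardinality each of $\{1,\dots,7\}$ is fixed by exactly one involution. The only real obstacle is minor: converting the abstract identity $asa^{-1} = s^{-1}$ into an explicit permutation formula for $a$ and reading off the fixed-point count from the invertibility of $2$ modulo $7$. Everything else is bookkeeping inside $D_7 \subset \cS_7$.
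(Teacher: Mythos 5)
Your proof is correct. It differs from the paper's mainly in the first step: the paper disposes of the cycle-type and shared-transposition claims with an unspecified case check (``it is easy to check that \dots $s=ab$ is not of order $7$''), whereas you derive everything from the normalisation $s=(1,2,\dots,7)$ and the relation $as=s^{-1}a$, which forces $a(i)\equiv k+1-i \pmod 7$ and hence a unique fixed point because $2$ is invertible mod $7$. This buys you slightly more than the lemma asks for --- an explicit classification of the seven reflections as the maps $i\mapsto k+1-i$ --- and replaces the implicit case analysis by a one-line congruence; your cancellation argument for a shared transposition $(p,q)$ (commute the two copies together in $ab$ and conclude $s$ fixes $p$ and $q$) is likewise a clean substitute for the paper's case check. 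The final assertion is proved the same way in both: two reflections with a common fixed point would force the whole group, in particular the $7$-cycle $s$, to fix it. One small point worth making explicit when you write this up: $b=as$ is itself one of the seven reflections, so the pairwise-distinctness of fixed points in your last step is also what gives $i\neq j$ in the ``equivalently'' clause of the statement.
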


\begin{proof}
It is easy to check that if $a$ or $b$ consist of less that 3 disjoint transpositions or if one 
transposition occurring in $a$ occurs also in $b$, then $s = ab$ is not of order 7, which contradicts 
Lemma \ref{l2.1}. For the last assertion note that if 2 involutions would fix the same number $i$,
then all involutions and hence all elements of the group would fix $i$.
\end{proof}

We give an example, although this is not necessary for the sequel, but makes the following 
proposition perhaps clearer.

\begin{exam}
Consider the following involutions of $\cS_7$ (with right action, i.e. the product is from left to right)
$$
a = (12)(34)(56) \quad \mbox{and} \quad b= (23)(45)(67),
$$
with product
$$
s:= ab = (1357642).
$$
According to Lemma \ref{l2.1} the group $\langle a,b \rangle$ is isomorphic to $D_7$.
\end{exam}

\begin{prop} \label{p2.4}
All dihedral subgroups $D_7$ of $\cS_7$ are conjugate to each other.
\end{prop}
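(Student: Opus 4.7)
The plan is to reduce the statement to the claim that every Sylow $7$-subgroup of $\cS_7$ is contained in a \emph{unique} subgroup isomorphic to $D_7$. Granting this, let $H_1, H_2 \subset \cS_7$ be two copies of $D_7$, with $P_i \subset H_i$ the unique cyclic subgroup of order $7$ (which is a Sylow $7$-subgroup of $\cS_7$). By Sylow's theorem there exists $\tau \in \cS_7$ with $\tau P_1 \tau^{-1} = P_2$. Then $\tau H_1 \tau^{-1}$ is a copy of $D_7$ in $\cS_7$ containing $P_2$, so by the uniqueness claim $\tau H_1 \tau^{-1} = H_2$, yielding conjugacy of $H_1$ and $H_2$.

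To prove uniqueness, I would fix $P = \langle s \rangle$ with $s$ a $7$-cycle, and analyse the normaliser $N := N_{\cS_7}(P)$. A direct count gives $|N| = 42$: there are $6! = 720$ seven-cycles in $\cS_7$, each cyclic subgroup of order $7$ contains six of them, hence there are $120$ Sylow $7$-subgroups, and $|N| = 5040/120 = 42$. The centraliser of $s$ in $\cS_7$ equals $\langle s \rangle$, because $s$ acts transitively on $\{1,\dots,7\}$ and so any commuting permutation is determined by its value at one point. Consequently the conjugation action of $N$ on $P$ has kernel exactly $P$, inducing an injection $N/P \hookrightarrow \Aut(P) \cong (\ZZ/7)^* \cong \ZZ/6$, which is an isomorphism for order reasons. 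Now any copy of $D_7$ containing $P$ lies in $N$, because its seven involutions conjugate $s$ to $s^{-1}$, and it corresponds to a subgroup of order $2$ in $N/P \cong \ZZ/6$. Since $\ZZ/6$ has a unique subgroup of order $2$, the $D_7$ containing $P$ is unique.

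The main obstacle is the centraliser computation $C_{\cS_7}(s) = \langle s \rangle$, which is really the only nontrivial input; everything else is Sylow together with order bookkeeping. A small additional check is that the subgroup of order $14$ produced above is genuinely dihedral rather than cyclic: this is automatic, since $\cS_7$ contains no element of order $14$ (such an element would be a disjoint product of a $7$-cycle and a transposition, requiring at least $9$ letters), so the unique subgroup of order $14$ containing $P$ is necessarily isomorphic to $D_7$, as required.
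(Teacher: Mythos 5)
Your proof is correct, and it takes a genuinely different route from the one in the paper. The paper argues by explicit enumeration: it normalises one involution to $a=(12)(34)(56)$, lists the twelve candidate partners $b_i$, computes which products $ab_i$ are $7$-cycles, and then exhibits explicit conjugating permutations identifying the eight resulting groups. You instead reduce conjugacy of the $D_7$'s to conjugacy of their Sylow $7$-subgroups plus the claim that each Sylow $7$-subgroup $P$ lies in a unique $D_7$; the latter follows from $\abs{N_{\cS_7}(P)}=42$, the centraliser computation $C_{\cS_7}(s)=\langle s\rangle$, and the fact that $N_{\cS_7}(P)/P\cong \ZZ/6$ has a unique subgroup of order $2$ (with the observation that the resulting order-$14$ group cannot be cyclic since $\cS_7$ has no element of order $14$). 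All the steps check out: the count of $120$ Sylow $7$-subgroups, the centraliser of a full cycle, and the embedding of $N/P$ into $\Aut(P)$ are each correct, and the final uniqueness argument is airtight. Your argument is shorter, less error-prone, and generalises immediately to $D_p\subset\cS_p$ for any odd prime $p$ (where $N_{\cS_p}(P)\cong AGL_1(\FF_p)$ and $\ZZ/(p-1)$ again has a unique subgroup of order $2$); what the paper's computation buys in exchange is an explicit generating pair $\langle a,b_1\rangle$ that it reuses as a concrete model of the monodromy group in the counting argument of Proposition \ref{prop3.5}, though either approach suffices to produce such a representative.
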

 
\begin{proof}
Different labelings of the set on which $\cS_7$ acts give conjugate subgroups of $\cS_7$. We label this set in such a way that 
$$
a = (12)(34)(56).
$$
The labelling is unique up to the transpositions $(12), (34)$ and $(56)$. Moreover,
according to Lemma \ref{l2.2} we may choose $b$ such that it fixes 1. The involution $b$ cannot
contain the transposition $(27)$, since otherwise $s = ab$ would contain the cycle $(172)$.
So we remain with the following possibilities $b_i$ for $b$:
$$
b_1= (23)(45)(67), \quad b_2 = (23)(46)(57), \quad b_3=(23)(47)(56),
$$
$$
b_4= (24)(35)(67), \quad b_5 = (24)(36)(57),  \quad b_6=(24)(37)(56),
$$
$$
b_7 = (25)(34)(67), \quad b_8 = (25)(36)(47), \quad b_9= (25)(37)(46),
$$
$$
b_{10} = (26)(34)(57), \quad b_{11} = (26)(35)(47), \quad b_{12} = (26)(37)(45).
$$
We compute
$$
s_1= ab_1= (1357642), \quad s_2= ab_2 = (1367542), \quad s_3 = a b_3 = (13742),
$$
$$
s_4= ab_4= (1457632), \quad s_5= ab_5 = (1467532), \quad s_6 = a b_3 = (14732),
$$
$$
s_7= ab_7= (15762), \quad s_8= ab_8 = (1537462), \quad s_9 = a b_9 = (1547362),
$$
$$
s_{10}= ab_{10}= (16752), \quad s_{11}= ab_{11} = (1637452), \quad s_{12} = a b_{12} = (1647352).
$$
So exactly the groups $G_i:= \langle a,b_i \rangle$ with $i = 1,2,4,5,8,9,11$ and 12 are isomorphic to $D_7$.

We may still conjugate the subgroups with the transpositions $(12), (34)$ and $(56)$. One checks that
$$
(12)s_1(12) = s_5^6 \in G_6, \quad (34)s_1(34) = s_4 \in G_4, \quad (56)s_1(56) = s_2 \in G_2,
$$
$$
(12)s_8(12) = s_{12}^6 \in G_{12}, \quad (34)s_8(34) = s_9 \in G_9, \quad (56)s_8(56) = s_{11} \in G_{11}.
$$
This implies that $G_1,G_2,G_4$ and $G_5$ as well as $G_8,G_9,G_{11}$ and $G_{12}$
are pairwise conjugate. Hence it suffices to show that $G_1$ is conjugate to $G_8$. But one easily checks that with the permutation $p:= (2463)$ we have
$$
p a p^{-1} = a \quad \mbox{and} \quad p b_1 p^{-1} = b_8.
$$
which gives the assertion.
\end{proof}

\begin{prop} \label{prop3.5}
Given $6$ points of $\PP^1$ in general position. There are exactly $400$ isomorphism classes of degree-7 coverings 
$\overline f: Y \ra \PP^1$ with monodromy group $D_7$  ramified of type $(2,2,2,1)$
over each of the $6$ points. 
\end{prop}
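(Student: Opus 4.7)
The plan is to reduce the geometric count to a purely group-theoretic one via Riemann's existence theorem, then carry out the enumeration inside the fixed copy of $D_7 \subset \cS_7$ singled out by Proposition \ref{p2.4}.

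First, I would invoke Riemann's existence theorem: with the 6 branch points and a base point fixed, isomorphism classes of connected degree-$7$ coverings of $\PP^1$ ramified of type $(2,2,2,1)$ exactly over $p_1, \dots, p_6$ with monodromy group $D_7$ are in bijection with $\cS_7$-conjugacy classes of tuples $(b_1, \dots, b_6) \in \cS_7^6$ such that every $b_i$ has cycle type $(2,2,2,1)$, the product $b_1 \cdots b_6$ equals the identity, and the subgroup $\langle b_1, \dots, b_6\rangle$ is isomorphic to $D_7$ (transitivity is then automatic). By Proposition \ref{p2.4} all such monodromy subgroups are $\cS_7$-conjugate, so I may fix one copy $D_7 = \langle s, j \rangle$ with $s$ a 7-cycle and $j$ an involution of type $(2,2,2,1)$ as in Lemma \ref{l2.2}.

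Next, I would determine the normalizer $N := N_{\cS_7}(D_7)$ and centralizer $C := C_{\cS_7}(D_7)$. Since the centralizer of the 7-cycle $s$ in $\cS_7$ is $\langle s\rangle$ and no non-trivial power of $s$ commutes with $j$, one has $C = 1$. Each $D_7 \subset \cS_7$ contains a unique cyclic subgroup of order 7 with $6$ generators, and (as in the proof of Proposition \ref{p2.4}) a given order-7 subgroup lies in a unique $D_7$, so the number of copies is $720/6 = 120$ and hence $|N| = 7!/120 = 42 = |\Aut(D_7)|$. Because any $\cS_7$-conjugation sending a tuple inside our fixed $D_7$ to another tuple inside the same $D_7$ must preserve the subgroup, $\cS_7$-orbits on tuples-inside-$D_7$ coincide with $N$-orbits, and since $C = 1$ the induced action of $N$ on $D_7$ is the full automorphism group. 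An automorphism that fixes each entry of a generating $6$-tuple fixes all of $D_7$, so the $N$-action on generating tuples is free.

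Third, I would count tuples in $D_7$. Every involution of $D_7$ has the form $j s^k$ with $k \in \ZZ/7$, and using the relation $sj = j s^{-1}$ one computes inductively
\[
(js^{k_1})(js^{k_2}) \cdots (js^{k_6}) = s^{-k_1 + k_2 - k_3 + k_4 - k_5 + k_6}.
\]
The product-$1$ condition is therefore the single linear relation $k_1 - k_2 + k_3 - k_4 + k_5 - k_6 \equiv 0 \pmod{7}$, yielding $7^5 = 16807$ solutions. A tuple fails to generate $D_7$ precisely when $\langle b_1, \dots, b_6 \rangle$ has order $2$, equivalently when all $k_i$ coincide (otherwise some $s^{k_i - k_j} \neq 1$ lies in the generated subgroup, forcing $\langle s \rangle \subset \langle b_i\rangle$ and hence $D_7$); this removes exactly $7$ tuples, leaving $16800$. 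Dividing by $|N| = 42$ gives $N = 400$.

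The step I expect to be the main obstacle is the passage from $\cS_7$-conjugacy on all valid tuples to a free $N$-action on tuples inside the fixed $D_7$: everything else is a direct calculation, but this reduction rests on the two sharp facts $C_{\cS_7}(D_7) = 1$ and $[\cS_7 : N_{\cS_7}(D_7)] = 120$, both of which must be carefully verified from the structural description of $D_7 \subset \cS_7$.
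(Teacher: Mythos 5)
Your proposal is correct and follows the same overall strategy as the paper: reduce to counting tuples of type-$(2,2,2,1)$ involutions with product $1$ inside a fixed copy of $D_7$ (using Proposition \ref{p2.4} to fix that copy), obtain $7^5-7$ generating tuples, and divide by a group of order $42$. The one place where you genuinely diverge is the justification of the $42$. The paper identifies the group acting as $L_7=AGL_1(\FF_7)$ by appealing to the classification of transitive subgroups of $\cS_7$ not contained in $\cA_7$ (namely $D_7$, $L_7$, $\cS_7$) and the normality of $D_7$ in $L_7$; you instead compute $N_{\cS_7}(D_7)$ directly, via the count $720/6=120$ of copies of $D_7$ (each order-$7$ subgroup lying in a unique $D_7$), giving $|N|=5040/120=42$, and check $C_{\cS_7}(D_7)=1$ so that $N\cong\Aut(D_7)$. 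Your route is more self-contained and sidesteps a slight imprecision in the paper (which calls $L_7$ the group of \emph{outer} automorphisms of $D_7$, whereas $\mathrm{Out}(D_7)$ has order $3$; what is meant is the full automorphism group). You are also more careful on two points the paper leaves implicit: the explicit linear relation $k_1-k_2+k_3-k_4+k_5-k_6\equiv 0 \pmod 7$ showing there are exactly $7^5$ product-one tuples, and the freeness of the $N$-action on generating tuples, which is what licenses dividing by $42$ on the nose rather than merely bounding the count.
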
 

\begin{proof}
Let $p_1, \dots,p_6\in \PP^1$ be 6 points in general position.
According to Proposition \ref{p2.4} all subgroups isomorphic to $D_7$ are conjugate. 
For example, we may take the monodromy subgroup of the covering to be
$$
\pi_1 = \langle a, b_1 \rangle = \{ 1, s_1, \dots, s_1^6, a, as_1, \dots, as_1^6 \}.
$$
One has to compute the number of 
conjugacy classes of 6 involutions $c_i$ of type (2,2,2,1) (associating $c_i$ to the point $p_i$)
such that 
\begin{equation} \label{e3.1}
c_1 c_2 c_3 c_4 c_5 c_6 = 1.
\end{equation}
The elements of  $D_7$ are either an involution (an odd permutation) or an element of order 7 (an even permutation). 
Hence the product of 5 involutions in $D_7$  gives an odd permutation, so it is an involution. Thus, for any involutions 
$c_1, \dots, c_5$ of $\pi_1$, $c_6:= c_1\cdots c_5$ is an involution satisfying \eqref{e3.1}. Since any 2 involutions of $\pi_1$
generate the group, only 7 of the corresponding coverings are not connected, namely those with 
$c_1 = \cdots = c_6$. This gives $7^5 -7$ coverings. Two $6$-tuples of involutions give 
isomorphic coverings if and only if they are conjugate. 

In order to compute the set of conjugate classes of 6-tuples of involutions, recall that the only 
transitive subgroups of $\cS_7$ which are not contained in $\cA_7$ are subgroups isomorphic to
\begin{itemize}
\item the dihedral group $D_7$,
\item the group $L_7 = AGL_1(\FF_7)$ of affine transformations of the line with 7 points,
\item $\cS_7$ itself.
\end{itemize}
(For the convenience of the reader we give a proof of this statement: 
Let $G$ be a proper subgroup
of $\cS_7$ of this type. Clearly $G$ is a soluble group, since it is not 
contained in $\cA_7$. It is primitive, being a
permutation group of degree 7. Hence, according to \cite[7.2.7]{r}, it 
is a subgroup of $L_7$.
Since $D_7$ is the only transitive subgroup of $L_7$, this gives the 
statement.)
The group $D_7$ is not normal in $\cS_7$, but it is normal in $L_7$. Since on the other hand 
$L_7$ is isomorphic to the group of outer automorphisms of $D_7$, two 
6-tuples of involutions give isomorphic coverings if and only if they are conjugate under an
element of $L_7$. Since $L_7$ is of order 42, we finally get
$$
\frac{7^5 -7}{42} = 400
$$
isomorphism classes of degree 7 coverings $Y \ra \PP^1$ of ramification type $(2,2,2,1)$.
\end{proof}

Together with \eqref{number-cov} we get

\begin{cor} \label{cor3.8}
Let $\overline f: Y \ra \PP^1$ be a degree 7 covering with monodromy group $D_7$  ramified of type $(2,2,2,1)$ over $6$ general points. 
There is exactly one cyclic \'etale cover 
$f: \tC \ra C$ such that the diagram \eqref{eq10.1} is commutative. In particular, 
$P(f) \simeq JY \times JY$ as polarized abelian varieties of type $(1,\dots,1,7)$.
\end{cor}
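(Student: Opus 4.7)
The corollary will follow as a direct bookkeeping consequence of Proposition \ref{prop3.5} together with the counting identity \eqref{number-cov}; the plan is to assemble facts already in hand and apply Lemma \ref{lem10.1} in the correct direction.

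First I would observe that the branch locus $\{p_1,\dots,p_6\}\subset\PP^1$ of $\overline f$ already forces the bottom row of diagram \eqref{eq10.1}: up to isomorphism there is a unique smooth double cover $h: C \to \PP^1$ branched precisely at these six points, so once $\overline f$ is given, the only remaining freedom is the choice of cyclic \'etale cover $f: \tC \to C$. Applying Lemma \ref{lem10.1} to this $C$ then gives, via the canonical bijection between the sets (a) and (b), a distinguished $f$ associated with the given $\overline f$, which by construction automatically fits into \eqref{eq10.1}.

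Second, to reinforce that this $f$ is indeed the only cyclic \'etale cover of $C$ of degree $7$ producing the given $\overline f$, I would invoke the two counts: $|\beta^{-1}([C])|=400$, coming from the $2400/6=400$ cyclic subgroups of order $7$ in $JC[7]$, and $N=400$ furnished by Proposition \ref{prop3.5}. Substituting both in the identity \eqref{number-cov}, namely $400 = d\cdot N$, forces $d=1$, which is precisely the uniqueness asserted in the corollary.

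Finally, the ``in particular'' clause $P(f)\simeq JY\times JY$ as polarized abelian varieties of type $(1,1,1,1,1,7)$ is then immediate: the underlying isomorphism of abelian varieties is the content of Proposition \ref{2.1}, and the polarization type $(1,1,1,1,1,7)$ is both the intrinsic type of the restricted polarization $\Xi$ on $P(f)$ and, via the matrix description of $\phi_{\psi^*\Xi}$ in Proposition \ref{polarization}, that of the pullback polarization on $JY\times JY$. I do not foresee any serious obstacle in the corollary itself; the substantive combinatorial work has already been absorbed into Proposition \ref{prop3.5}, whose enumeration of conjugacy classes of $6$-tuples of involutions in $D_7\subset\cS_7$ with trivial product, taken modulo the outer automorphism group $L_7$, is the main input.
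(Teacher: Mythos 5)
Your proposal is correct and follows essentially the same route as the paper: the paper's proof of this corollary is precisely the observation that Proposition \ref{prop3.5} gives $N=400$, so the identity \eqref{number-cov}, $400=d\cdot N$, forces $d=1$, with existence coming from Lemma \ref{lem10.1} and the ``in particular'' clause from Propositions \ref{2.1} and \ref{polarization}. Your added remarks (the branch locus determining $h:C\to\PP^1$, and the explicit appeal to the bijection of Lemma \ref{lem10.1}) only make explicit what the paper leaves implicit.
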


\section{Proof of the main theorem}

Let $\cC_3$ denote the locus in $\cM_3$ of genus 3 curves admitting a degree 7 covering 
$\overline f: Y \ra \PP^1$ with ramification type $(2,2,2,1)$ over 6 general points in $\PP^1$. It is an irreducible variety of dimension 
3 since it is an image of the moduli space of pairs $(Y, \overline f)$, which according to 
Lemma \ref{lem10.1} are in bijection with the elements in $\cR_{2,7}$. 

\begin{thm} 
The elements of a generic fiber of the Prym map $\pr:\cR_{2,7} \ra \cB_D$ are in bijection with the set of degree 7 coverings 
$\overline f: Y \ra \PP^1$ with ramification type $(2,2,2,1)$ over 6 general points that a curve $Y \in \cC_3$ admits. 
\end{thm}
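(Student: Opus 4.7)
The strategy is to exhibit mutually inverse maps between $\pr^{-1}([P])$ and the set $S_Y$ of isomorphism classes of degree-$7$ coverings $\overline{f}:Y\to\PP^1$ with ramification type $(2,2,2,1)$, where $Y$ is the genus-$3$ curve with $P\cong JY\times JY$ which, for a generic $[P]\in\cB_D$ in the image of $\pr$, is uniquely determined by Theorem \ref{unique}. The genericity hypothesis on the fiber enters precisely at this step. Note that by the forward construction applied to any $[f]\in\pr^{-1}([P])$, this $Y$ automatically belongs to $\cC_3$.

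For the forward map $\Phi:\pr^{-1}([P])\to S_Y$, I would fix a lifting $j$ of the hyperelliptic involution, set $Y_j:=\tC/\langle j\rangle$, and read off the covering $\overline{f}_j:Y_j\to\PP^1$ from diagram \eqref{eq10.1}; its ramification type is $(2,2,2,1)$ as established at the start of Section 3. Proposition \ref{2.1} gives $P(f)\cong JY_j\times JY_j$, so Theorem \ref{unique} identifies $Y_j$ with $Y$. The seven possible liftings $j_0,\dots,j_6$ correspond to conjugate subgroups of $D_7$ and yield isomorphic coverings, as observed just before Lemma \ref{lem10.1}, so $\Phi([f]):=[\overline{f}_j]$ is independent of the choice of lifting.

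For the inverse map $\Psi:S_Y\to\pr^{-1}([P])$, I would invoke the construction behind Lemma \ref{lem10.1} and Corollary \ref{cor3.8}: given $[\overline{f}:Y\to\PP^1]$, the six branch points already determine the double cover $h:C\to\PP^1$, hence $C$, and the fiber product $\tC:=C\times_{\PP^1}Y$ produces (via Abhyankar's lemma) a unique \'etale cyclic cover $f:\tC\to C$ of degree $7$ fitting into diagram \eqref{eq10.1}. By Corollary \ref{cor3.8}, $P(f)\cong JY\times JY$ with the polarization of Proposition \ref{polarization} and the order-$7$ automorphism induced by $\sigma$, which coincides with the polarized structure of $P\in\cB_D$ because $Y$ is canonically the same curve. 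Hence $\Psi([\overline{f}])\in\pr^{-1}([P])$.

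The remaining point is that $\Phi$ and $\Psi$ are mutually inverse, which is essentially the content of Lemma \ref{lem10.1}: starting from $[f]$, the Galois closure of $\overline{f}_j$ over $\PP^1$ with group $D_7$ is isomorphic to $\tC$, so $\tC\cong C\times_{\PP^1}Y_j$ and $\Psi\circ\Phi([f])=[f]$; starting from $[\overline{f}]$, one of the seven degree-$2$ quotients of the reconstructed $\tC$ by a lifting of $\iota$ returns the datum $(Y,\overline{f})$, so $\Phi\circ\Psi([\overline{f}])=[\overline{f}]$. I do not expect a genuine obstacle here: the hard work has already been done in Section~2 (identifying $Y$ uniquely from $P$) and at the beginning of Section~3 (the explicit fiber-product correspondence); the proof of the theorem is the assembly of these pieces.
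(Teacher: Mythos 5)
Your proposal is correct and takes essentially the same route as the paper: the published proof likewise combines Corollary \ref{cor3.8} (which forces distinct elements of the fiber to lie over distinct genus-2 curves and makes $f$ recoverable from the pair $(Y,\overline f)$ via the fiber-product construction of Lemma \ref{lem10.1}) with Theorem \ref{unique} (uniqueness of $Y$ from $P$, which is where genericity enters), merely phrasing the conclusion as ``$f$ is completely determined by $(Y,\overline f)$'' instead of writing out the two mutually inverse maps. Your explicit $\Phi$/$\Psi$ formulation is an unpacking of that same argument, at the same level of detail (including the lightly justified claim that every covering of $Y$ returns the same polarized structure on $JY\times JY$, which the paper also does not belabor).
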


\begin{proof}
By Corollary \ref{cor3.8} the elements in the fiber of $P(f) \in \cB_D$ are necessarily in different fibers of the forgetful map $\beta:  \cR_{2,7} \ra \cM_2$.
Together with uniqueness of the decomposition $P(f) \simeq JY \times JY  $ as polarized abelian varieties
(see Theorem \ref{unique}) this implies that the covering $f:\tC \ra C$ is completely determined by the pair 
$(Y, \overline{f})$, since the branch points of $\overline{f}$ determine the genus 2 curve $C$.
So the elements in $\cR_{2,7}$ with Prym variety isomorphic to $JY \times JY$ are in bijection with the coverings 
$\overline f: Y \ra \PP^1$ with ramification type $(2,2,2,1)$ for a fixed curve $Y \in \cC_3$.
\end{proof}

It has been shown in \cite[Theorem 1.1]{lo2} that the degree of the Prym map $\pr$ is 10, so we get as immediate corollary

\begin{cor}
There are at most 10 coverings $\overline f: Y \ra \PP^1$ of degree 7 with ramification type $(2,2,2,1)$ for a curve $Y \in \cC_3$.
\end{cor}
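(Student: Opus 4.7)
The plan is to establish a bijection between the generic fibre $\pr^{-1}(P) \subset \cR_{2,7}$ and the set of degree-$7$ covers $\overline f: Y \to \PP^1$ of ramification type $(2,2,2,1)$, where $Y$ is the curve of genus $3$ determined (up to isomorphism) by $P$ via Proposition \ref{2.1}. The argument has three main steps: extract a well-defined curve $Y$ from $P$; show the map $[f] \mapsto \overline f$ is injective on the fibre; and show it is surjective onto the set of covers of $Y$ of the prescribed type.

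First I would start with a generic $P = P(f) \in \cB_D$ and apply Proposition \ref{2.1} to get a decomposition $P \simeq JY \times JY$. The crucial input is Theorem \ref{unique}: for generic $f$ this decomposition is unique up to automorphisms of $P$, so the isomorphism class of $Y$ depends only on $P$, not on the chosen preimage $[f] \in \pr^{-1}(P)$. Consequently every element of the fibre produces, via Lemma \ref{lem10.1}, a covering $\overline f: Y \to \PP^1$ of the stated ramification type on the \emph{same} curve $Y$.

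For injectivity of $[f] \mapsto \overline f$, note that the branch points of $\overline f$ coincide with the six Weierstrass points of the hyperelliptic double cover $h: C \to \PP^1$ (by the commutativity of diagram \eqref{eq10.1}). Thus the pair $(Y, \overline f)$ determines the base curve $C$, and then by Corollary \ref{cor3.8} it determines the étale cyclic cover $f: \tC \to C$ uniquely up to isomorphism. Surjectivity is then immediate from Corollary \ref{cor3.8}: any covering $\overline f: Y \to \PP^1$ of ramification type $(2,2,2,1)$ over six general points extends to a unique commutative diagram \eqref{eq10.1} with $f$ étale cyclic of degree $7$, and by Proposition \ref{2.1} this $f$ satisfies $P(f) \simeq JY \times JY = P$, so $[f] \in \pr^{-1}(P)$.

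The main obstacle, already handled in the earlier sections, is the uniqueness of the decomposition $P \simeq JY \times JY$ (Theorem \ref{unique}); without it, two covers $f_1, f_2$ in the same fibre could a priori give rise to non-isomorphic curves $Y_1, Y_2$ together with different base curves $C_1 \neq C_2$, breaking the bijection. Once this uniqueness is available, together with the combinatorial counting of Lemma \ref{lem10.1} and Corollary \ref{cor3.8}, the bijection follows at once.
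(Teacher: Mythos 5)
Your proposal establishes (correctly, and along essentially the same lines as the paper's Section~4) the \emph{bijection} between the generic fibre $\pr^{-1}(P)$ and the set of degree-$7$ covers $\overline f: Y \to \PP^1$ of ramification type $(2,2,2,1)$ on the curve $Y$ determined by $P$. That bijection is the content of the main theorem, not of this corollary. The corollary asserts a numerical bound, and the number $10$ appears nowhere in your argument: a bijection with a fibre of $\pr$ gives no bound at all unless you also know how large that fibre is. The paper closes the gap by citing \cite[Theorem 1.1]{lo2}, where it is shown that (a partial compactification of) $\pr$ is generically finite of degree $10$; hence a generic fibre has at most $10$ elements, and the bijection transports this bound to the set of covers $\overline f$. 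Without that input your argument proves only that the number of such covers equals the cardinality of a generic fibre of $\pr$, which is strictly weaker than the statement.

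Note also that the combinatorial counts you invoke (Lemma \ref{lem10.1}, Corollary \ref{cor3.8}, and the computation $400 = d\cdot N$ behind it) cannot substitute for the degree computation: they count covers $\overline f$ with a \emph{fixed} branch divisor $p_1,\dots,p_6 \subset \PP^1$ ranging over all source curves $Y$, whereas the corollary counts covers of a \emph{fixed} curve $Y$ with varying branch divisor. These are transverse counting problems, and only the second is controlled by the fibre of $\pr$. (A minor imprecision: the branch points of $\overline f$ are the branch points of $h$ in $\PP^1$, i.e.\ the images of the Weierstrass points of $C$, not the Weierstrass points themselves; this does not affect your injectivity argument.)
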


\end{document}